\newtheorem{theorem}{Theorem}
\newtheorem{lemma}[theorem]{Lemma}
\begin{document}

\preprint{AIP/12https://www.overleaf.com/project/607b700b5b6aed13ffadacf13-QED}

\title[Kuramoto model on a sphere] {The Kuramoto model on a sphere: Explaining its low-dimensional dynamics with group theory and hyperbolic geometry}

\author{Max Lipton}
\email{ml2437@cornell.edu}
 \affiliation{Department of Mathematics, Cornell University, Ithaca, NY 14853}
\author{Renato Mirollo}%
 \email{mirollo@bc.edu}
\affiliation{Department of Mathematics, Boston College, Chestnut Hill, MA 02467}%

\author{Steven H. Strogatz}
 \email{shs7@cornell.edu}
\affiliation{Department of Mathematics, Cornell University, Ithaca, NY 14853}%

\date{\today}

\begin{abstract}
We study a system of $N$ interacting particles moving on the unit sphere in $d$-dimensional space. The particles are self-propelled and coupled all to all, and their motion is heavily overdamped. For $d=2$, the system reduces to the classic Kuramoto model of coupled oscillators; for $d=3$, it has been proposed to describe the orientation dynamics of swarms of drones or other entities moving about in three-dimensional space. Here we use group theory to explain the recent discovery that the model shows low-dimensional dynamics for all $N \ge 3$, and to clarify why it admits the analog of the Ott-Antonsen ansatz in the continuum limit $N \rightarrow \infty$. The underlying reason is that the system is intimately connected to the natural hyperbolic geometry on the unit ball $B^d$. In this geometry, the isometries form a Lie group consisting of higher-dimensional generalizations of the Möbius transformations used in complex analysis. Once these connections are realized, the reduced dynamics and the generalized Ott-Antonsen ansatz  follow immediately. This framework also reveals the seamless connection between the finite and infinite-$N$ cases. Finally, we show that special forms of coupling yield gradient dynamics with respect to the hyperbolic metric, and use that fact to obtain global stability results about convergence to the synchronized state.
\end{abstract}

\maketitle

\begin{quotation}
Exactly solvable models have long played a central role in nonlinear dynamics, from Newton’s work on the gravitational two-body problem to breakthroughs in understanding solitons in the 1970s. Often, the solvability of a model reflects an underlying symmetry or other special structure in its governing equations. In this paper we discuss a many-body system of current interest, known as the Kuramoto model on a sphere, whose unexpectedly low-dimensional dynamics call out for explanation. The model consists of $N$ identical overdamped particles moving on a $(d-1)$-dimensional sphere in $d$-dimensional Euclidean space, yielding a state space of dimension $N (d-1)$. Yet despite the presence of damping, the model exhibits enormously many constants of motion. Here we show that its trajectories are confined to invariant manifolds of dimension $d(d+1)/2$ for all $N \ge 3$ and trace the origin of this low-dimensional behavior to an underlying group-theoretic structure in the system. Specifically, the Kuramoto model on a sphere turns out to be the flow induced by the action of the group of Möbius transformations on the $d$-dimensional ball, and its invariant manifolds are the associated group orbits. For certain forms of coupling, the model acquires further structure (hyperbolic gradient dynamics) that force almost all solutions to converge to the perfectly synchronized state.  
\end{quotation}  

\section{Introduction} In 1975, Kuramoto introduced a model for a large  population of coupled oscillators with randomly distributed natural frequencies.~\cite{Kuramoto1975self} Kuramoto’s  model displayed many remarkable features: It was exactly solvable (at least in some sense), despite being nonlinear and infinite-dimensional.~\cite{Kuramoto1984chemical} Its  solution shed analytical light on a phase transition to mutual synchronization that Winfree had  previously discovered in a similar but less convenient system of oscillators.~\cite{winfree1967biological, winfree1980} Since then, the  Kuramoto model has been an object of fascination for nonlinear dynamicists, as well as a  simplified model for many real-world instances of coupled oscillators in physics, biology,  chemistry, and engineering.~\cite{strogatz2000Kuramoto, pikovsky2003synchronization, strogatz2003sync, acebron2005Kuramoto, dorfler2014synchronization, pikovsky2015dynamics, rodrigues2016Kuramoto, bick2020understanding} 

From a mathematical standpoint, one of the most intriguing problems has been to  explain the tractability of the Kuramoto model. What symmetry or other hidden structure  accounts for its solvability?  

The first clues came from work on an adjacent topic: series arrays of $N$  identical overdamped Josephson junctions. The governing equations for those superconducting  oscillators are closely related to the equations of the Kuramoto model~\cite{wiesenfeld1996synchronization, wiesenfeld1998frequency}, and themselves displayed  remarkable dynamical features, such as surprisingly low-dimensional invariant tori\cite{tsang1991dynamics,swift1992averaging} and ubiquitous neutral stability of splay states\cite{nichols1992ubiquitous}, despite the presence of  damping and driving in the governing equations. These features were explained in 1993 by the  discovery of a certain change of variables, now called the Watanabe-Strogatz transformation~\cite{watanabe1993integrability,watanabe1994constants},  which showed that the governing equations have $N-3$ constants of motion for all $N \geq 3$. Goebel~\cite{goebel1995comment} then pointed out that the Watanabe-Strogatz transformation could be viewed as a time-dependent version of a linear fractional transformation, a standard tool in complex  analysis. For more than a decade, however, these results did not attract much attention,  perhaps because they were assumed to be restricted to problems about Josephson junctions,  and within that specialized setting, even further restricted to junctions that were strictly identical.  

A breakthrough occurred in 2008 with the work of Ott and Antonsen.~\cite{ott2008low,ott2009long} They found an  astonishing way to capture the macroscopic dynamics of the infinite-$N$ Kuramoto model, even  when the oscillators’ frequencies were non-identical and randomly distributed. First, they  wrote down an ansatz --- seemingly pulled out of thin air --- for the density function $\rho(\theta,  \omega, t)$ of oscillators having phase $\theta$ and intrinsic frequency $\omega$ at time $t$. Their ansatz had the form of a time-dependent Poisson density (a density better known for its  role in the study of partial differential equations, specifically for the solution of Laplace’s  equation on a disk, given the values of the unknown function on the bounding circle). By  making this ansatz of a Poisson density, Ott and Antonsen reduced the infinite-$N$ Kuramoto  model, an integro-partial differential equation, to an infinite set of coupled ordinary differential  equations. Then, by further assuming that the intrinsic frequencies of the oscillators were randomly distributed  according to a Lorentzian (Cauchy) distribution, Ott and Antonsen showed that the order parameter dynamics of the Kuramoto model could be reduced tremendously, all the way down  to an ordinary differential equation for a single scalar variable, the amplitude of the order  parameter.~\cite{ott2008low} With this discovery, the floodgates were now open. Almost immediately the Ott-Antonsen ansatz was used to solve many longstanding problems about the Kuramoto model  and its variants, as well as to generate and solve many new problems.~\cite{pikovsky2015dynamics}  

Still, a lot of old questions hung in the air. Both the Watanabe-Strogatz transformation  and the Ott-Antonsen ansatz appeared somewhat unmotivated and almost miraculous. Where  did they come from, and why did they work? It also was not clear whether they were  connected or perhaps even equivalent. There were reasons to doubt they were linked:  the Watanabe-Strogatz transformation could be used for any finite $N \geq 3$, but seemed restricted to identical oscillators, whereas the Ott-Antonsen ansatz allowed for non-identical  oscillators but seemed restricted to the continuum limit of infinite $N$. Also, why were linear fractional transformations and Poisson densities --- tools from other branches of  mathematics --- popping up in these studies of dynamical systems? 

Later work made sense of all of this. The Josephson arrays and the Kuramoto model  both turned out to have deep mathematical ties to group theory, hyperbolic geometry, and  projective geometry, and both the Watanabe-Strogatz transformation and the Ott-Antonsen  ansatz were tapping into these structures.~\cite{pikovsky2015dynamics,pikovsky2008partially,marvel2009identical,stewart2011phase,chen2017hyperbolic,chen2019dynamics} For the Josephson arrays, the governing equations  turned out to be generated by a group action, specifically the action of the Möbius group of linear fractional transformations of the unit disk to itself. Seen in this light, the constants of  motion for the Josephson arrays were cross-ratios, and the invariant tori were group orbits. The  same group-theoretic structure was found to underlie the Kuramoto model (in the special case  where all the oscillator frequencies are identical) as well as other sinusoidally coupled systems  of identical phase oscillators.\cite{marvel2009identical, chen2017hyperbolic}  

In the past few years, several researchers wondered how far this story could be pushed.  Are there quantum or higher-dimensional extensions of the Kuramoto model that might show  similar reducibility? A number of results along these lines have now been found.~\cite{lohe2009non,tanaka2014solvable,chi2014emergent, ha2016collective, ha2018relaxation, lohe2018higher, jacimovic2018low, chandra2019continuous, chandra2019complexity, lohe2019systems, deville2019synchronization, ha2021constants, jacimovic2021reversibility,dai2021d} In particular,  several researchers have explored a generalization of the Kuramoto model in which the  oscillators move on spheres instead of the unit circle. These spheres could be either the  ordinary two-dimensional sphere or higher-dimensional spheres. A counterpart of the Ott-Antonsen ansatz has been discovered for the continuum version of the Kuramoto model on the  $d$-dimensional sphere and used to reduce its infinite-dimensional dynamics to a lower dimensional set of ordinary differential equations (ODEs).~\cite{chandra2019complexity} But as before, some of the results appear disconnected and a bit miraculous.  

Our goal in this paper is to show that hyperbolic geometry and group theory can unify  and clarify our understanding of the Kuramoto model on a sphere and make all the latest results seem natural, just as they did before for the traditional Kuramoto model. Our approach  explains the model’s reducibility for any finite number of oscillators, as well as for the  continuum limit, and it reveals why Poisson densities arise again in this setting. There is a close  connection to Laplace’s equation and harmonic analysis, as we will see in Section V. We  also find that complex analysis is not really essential, which is just as well, since it does not  generalize to the higher-dimensional spheres being considered here. Instead, the proper  mathematical setting is harmonic analysis and hyperbolic geometry on higher-dimensional  balls. Our work also allows us to go beyond merely unifying existing results. For instance, by establishing that linearly coupled systems of identical Kuramoto oscillators on a sphere  have a hyperbolic gradient structure, we can prove new global stability results about convergence to the  synchronized state, as described in Section VIII.

\section{\label{sec:level1}Preliminaries}
\subsection{\label{sec:level2}The Kuramoto Model on a Sphere}
In a pioneering paper, Lohe~\cite{lohe2009non} observed that there are at least two natural generalizations of the Kuramoto model to higher dimensions. One of them replaces the phases $\theta_j$ of the original Kuramoto model\cite{Kuramoto1975self,Kuramoto1984chemical} with complex numbers $\exp(i \theta_j)$ on the unit circle and then views those as equivalent to $2 \times 2$ rotation matrices parametrized by a rotation angle $\theta_j$. From there, it is a natural step to consider other Lie groups of matrices, many of which are non-Abelian.

Our concern in this paper, however, is with a different  generalization of the Kuramoto model. Instead of regarding oscillators as particles moving on the unit circle, we think of them as particles moving on the unit sphere. The sphere could be the surface of the ordinary unit ball in three dimensions, or some higher-dimensional sphere $S^{d-1}$ in $\mathbb{R}^d$. When $d=2$, the sphere reduces to the unit circle in the plane, and the model reduces to the original Kuramoto model.

The governing equations for the \emph{Kuramoto model on a sphere} are 
\begin{equation}
 \dot x_i = A_ix_i + Z - \langle Z, x_i \rangle x_i, \quad i = 1, \dots, N,
 \label{governingeqn}
\end{equation}
where $x_i$ is a point on the unit sphere $S^{d-1} \subset {\Bbb R}^d$, each $A_i$ is an antisymmetric $d \times d$ matrix, and $Z \in {\Bbb R}^d$ is a $d$-dimensional vector analogous to the complex order parameter for the classic Kuramoto model. In Eq.~\eqref{governingeqn}, the matrix $A_i$ and the vector $Z$ are functions of the configuration $(x_1, \dots, x_N)$ of points on the sphere. Note that $Z$ does not depend on $i$; like the usual Kuramoto order parameter, it plays the role of a mean-field quantity that couples all the ``oscillators'' $x_i$ together. The antisymmetric matrix $A_i$ is the higher-dimensional counterpart of an intrinsic frequency $\omega_i$ in the original Kuramoto model. 

A straightforward computation shows that the dot product between an oscillator's instantaneous position and instantaneous velocity satisfies $\langle x_i, \dot{x_i} \rangle = 0$, which proves that oscillators that start on the unit sphere stay on it forever. The state space for this system is the $N$-fold product $X = (S^{d-1})^N$, which has dimension $N(d-1)$.  Later we will also consider the natural infinite-$N$ analogue of \eqref{governingeqn}, where a state is a probability measure on $S^{d-1}$. 

In what follows, we allow $Z$ to be any smooth function on the state space $X$, though in examples we usually restrict to fairly simple functions, like a linear combination of the form
$$
Z = \sum_{i=1}^N a_i x_i
$$
where the $a_i$ are real constants.  

\subsection{\label{sec:level2}General philosophy: Lie groups and reducible systems}

There is a general technique for dimensional reduction of systems like \eqref{governingeqn} which we pause to describe.  Suppose we have a smooth manifold $X$, which we think of as a state space,  and a group $G$ acting on $X$, where $G$ is also a smooth manifold (in other words, $G$ is a Lie group).  Then the group action induces a space of vector fields on $X$, the so-called  infinitesimal generators of the action.  

To construct these generators, let $\gamma(t)$ be a smooth curve in $G$ with $\gamma(0) = e$, the identity element in $G$. Then the derivative $\dot{\gamma}(0) = v$, where $v$ is a vector in the tangent space $T_eG$ of $G$ at $e$. This vector $v$ is in turn  associated very naturally with a corresponding vector $\Tilde{v}$  in the tangent space of $X$, as follows. For each $x \in X$, $t \mapsto \gamma(t)x$ is a smooth curve in $X$, and its derivative at $t = 0$ defines a vector $\Tilde{v}_x$ in the tangent space at $x$.  The vector field $\Tilde{v} = (\Tilde{v}_x)$ is the infinitesimal generator corresponding to the element $v$ in the tangent space of $G$ at $e$. At each point $x \in X$, the infinitesimal generators $v_x$ span a linear subspace $V_x$ of the tangent space $T_xX$, which is exactly the set of vectors tangent to the group orbit $Gx$ at the element $x$.  

Now suppose we have a vector field $\xi$ on $X$, which defines a dynamical system on the state space $X$.  If $\xi_x \in V_x$ at each point $x \in X$, then the flow corresponding to the vector field $\xi$ will be constrained to lie on the group orbits $Gx$.  If the dimension of $G$ is less than the state space $X$, then this will give us a dimensional reduction of the dynamics from $ \dim X$ to $\dim G$.  Now suppose, as is the case in applications of this methodology, the correspondence $G \to Gx$ is one-to-one for generic $x \in X$; equivalently, the stabilizer subgroup $G_x = \{e\}$ for generic $x \in X$.  Then for each $x \in X$, the flow on the group orbit $Gx$ is equivalent to a flow on $G$, which is a lower-dimensional space than the state space $X$.

Here is a familiar example: Consider the orthogonal group $G = SO(d)$ consisting of orientation-preserving linear isometries of ${\Bbb R}^d$.  (For an intuitive picture, think of these isometries as rotations.) Then $G$ acts on ${\Bbb R}^d$, and the corresponding infinitesimal generators are the linear vector fields $\nu_x = Ax$, where $A$ is any skew-symmetric matrix.  We can also let $G$ act on the product space $X = ({\Bbb R^d})^N$ of $N$-tuples $x = (x_i)$, $x_i \in {\Bbb R}^d$, and then the infinitesimal generators have the form $(\nu_x)_i = Ax_i$ for some skew-symmetric matrix $A$.  We could also, if we like, restrict $X$ to $N$-tuples $(x_i)$ with $x_i \in S^{d-1}$, the state space for \eqref{governingeqn}.  Now suppose we had a dynamical system on $X$ of the form
$\dot x_i = A x_i$, where the skew-symmetric matrix $A$ is a function of the configuration $x = (x_i)$; this is just the special case of \eqref{governingeqn} with $Z = 0$.  Then the dynamics on $X$ reduces to dynamics on $G$, which has dimension $d(d-1)/2$, and for large $N$ this is much smaller than the dimension of $X$, which is $N(d-1)$.  Basically the configuration $(x_i)$ of points on the sphere $S^{d-1}$ can only move collectively by a rotation of the sphere, so the dynamics reduces to a dynamical system on $SO(d)$.

We want to apply this methodology to the system \eqref{governingeqn}. But since that system generally has $Z \ne 0$, we need a different group action to make this strategy work. Fortunately, vector fields of the form seen in the Kuramoto model, 
\begin{equation}
 \dot x = Ax + Z - \langle Z, x \rangle x,
 \label{infinitesimalgs}
\end{equation}
 turn out to arise as the infinitesimal generators of the group action of a larger group $G$ acting on the sphere $S^{d-1}$ and its interior, the unit ball $B^d$.  This larger group is the M\"obius group of isometries of the hyperbolic geometry on $B^d$.  It contains the orthogonal group as a proper subgroup, but has bigger dimension $d(d+1)/2$.

So for the Kuramoto system \eqref{governingeqn}, if the matrices $A_i$ all happen to be identical, we can reduce the dynamics of the system to a much smaller system on this Möbius group $G$, and use this reduction to understand the dynamics on the larger state space $X$.  This is the essence of the approach we take. Ultimately we apply it to prove a synchronization theorem for the system \eqref{governingeqn} for the special order parameter 
$Z = \sum_{i=1}^N a_i x_i$ 
with $ a_i  > 0$. But first we need to show that vector fields on $S^{d-1}$ of the form in   \eqref{infinitesimalgs} are indeed the infinitesimal generators of the action of the larger M\"obius group.

\subsection{\label{sec:level2}Hyperbolic geometry and Möbius transformations}

In this paper, a \emph{Möbius transformation} is a composition of Euclidean isometries and spherical inversions of $\mathbb{R}^d$ mapping the unit ball homeomorphically to itself and preserving orientation. This is a more restrictive definition than the commonly defined Möbius transformations which in general do not need to preserve the unit ball. 

As in the case $d=2$, flows of the form (1) are intimately related to the natural hyperbolic geometry on the unit ball $B^d$ with boundary $S^{d-1}$.  This geometry has metric
$$
ds = {2 |dx| \over 1 - |x|^2},
$$
where $|dx|$ is the ordinary Euclidean metric. Isometries are assumed to be with respect to this hyperbolic geometry, unless otherwise qualified as Euclidean. The metric $ds$ has constant (sectional) curvature equal to $-1$, and we can describe its isometries, which generalize the M\"obius transformations preserving the unit disc for $d=2$. For $d = 2$, let $w \in B^2$ and consider the M\"obius transformation
$$
M_w(x) = {x-w \over 1-\overline w x},
$$
which preserves the unit disc $B^2$ and its boundary $S^1$.  To generalize this to higher dimensions, we need to express $M_w(x)$ without reference to complex arithmetic operations or conjugation.  This is the goal of the next subsection. 

\subsubsection{M\"obius transformations in higher dimensions}

Using the identity $2 \langle w, x \rangle = \overline w x + w \overline x$, we see that

\begin{align*}
{(x-w)  (1 - w \overline x) \over (1-\overline w x)(1 - w \overline x)} &= {x-w -w |x|^2 +w^2 \overline x\over 1 -2 \langle w, x \rangle +|w|^2|x|^2} \\ &= {x-w -w |x|^2 +w( 2 \langle w, x\rangle - \overline w x) \over 1 -2 \langle w, x \rangle +|w|^2|x|^2} \cr &= {(1-|w|^2) x-(1  - 2 \langle w, x \rangle +|x|^2) w \over 1 - 2\langle w, x\rangle + |w|^2 |x|^2}. 
\end{align*}
This form of $M_w$ generalizes to higher dimensions:  Let $w \in B^d$ and define
\begin{align*}
M_w(x) &=  {(1-|w|^2) x-(1  - 2 \langle w, x \rangle +|x|^2) w \over 1 - 2\langle w, x\rangle + |w|^2 |x|^2}  \\ &= {(1-|w|^2) (x-|x|^2 w)\over 1 - 2\langle w, x\rangle + |w|^2 |x|^2} -w,
\end{align*}
where $x \in B^d$ or $S^{d-1}$.  We call $M_w$ a \emph{boost transformation}.  

If $|x| = 1$ this formula simplifies to
$$
M_w(x) = {(1-|w|^2) (x-w) \over |x-w|^2 } -w.
$$
Now see that
\begin{align*}
M_w(w) &=  {(1-|w|^2) w-(1  - 2 \langle w, w \rangle +|w|^2) w \over 1 - 2\langle w, w\rangle + |w|^2 |w|^2} \\ &= {(1-|w|^2) w-(1  - |w|^2) w \over 1 - 2\langle w, w\rangle + |w|^2 |w|^2} = 0.
\end{align*}
Alternatively, we can use the second form to show
$$
M_w(w)    = {(1-|w|^2) (w-|w|^2 w)\over 1 - 2\langle w, w\rangle + |w|^2 |w|^2} -w = { (1-|w|^2)^2 w \over (1-|w|^2)^2 } - w = 0.
$$ 
Similar computations show that $M_0$ is the identity, $M_w^{-1} = M_{-w},$ and $M_w(0) = -w$. 

It is a standard result in hyperbolic geometry (e.g., see Theorem 3.5.1 in Beardon~\cite{ab83}) that any orientation-preserving isometry of $B^d$ can be expressed uniquely as the product of a boost and a rotation (an orientation-preserving orthogonal transformation), and these operations can be done in either order. In other words, any such  isometry can be written uniquely in the form
$$
g(x) = \zeta M_w(x)
$$
and also uniquely in the form
$$
g(x) = M_{-z} (\xi x),
$$
for some vectors $w, z \in B^d$ and rotations $\zeta, \, \xi \in SO(d)$,  where $SO(d)$ denotes the group of orientation-preserving orthogonal linear transformations on ${\Bbb R}^d$. So counting the extra $d$ dimensions that we get from the vector $w$ or $z$,  we see that the Möbius group has dimension $d + d(d-1)/2 = d(d+1)/2$.

Depending on the situation, one of these two forms might be more useful than the other, even though they are equivalent. In the interest of flexibility, it is useful to find how the parameter pairs $w,\zeta$ and  $z,\xi$ are related. We can do this by comparing the linearizations of the two formulas for $g(x)$ at $x=0$: 
\begin{align*}
    g(x) \approx \zeta  \left(-w + (1-|w|^2)x \right) \approx z + (1 - |z|^2)\xi x.
\end{align*}
Equating coefficients implies $z = - \zeta w$ (hence $|z| = |w|$) and $\xi = \zeta$.

\subsubsection{Infinitesimal generators}
Having parametrized the Möbius transformations, we are now ready to derive the associated infinitesimal generators of the Möbius group action on the ball $B^d$. We will show that they correspond to flows of the form 
\begin{equation}
\dot y = Ay -  \langle Z, y \rangle y + { 1\over 2}  (1+|y|^2) Z , \label{infinitesimalflow}
\end{equation}
where $A$ is an antisymmetric $d \times d$ matrix and $Z \in {\Bbb R}^d$ is a vector. Note that, as advertised, this flow extends to a flow on $S^{d-1}$ of the Kuramoto form in \eqref{infinitesimalgs}, as we can see by restricting \eqref{infinitesimalflow} to vectors $y$ on the unit sphere where $|y|=1$.

To derive \eqref{infinitesimalflow}, we work separately with the boost and rotation components. Let us start with the boost component. Replace $w$ by $tw$ and expand $M_{tw}(x)$ to first order in $t$: $$
M_{tw} (x) \approx {x - |x|^2 tw \over 1-2 t \langle w,x \rangle} -tw \approx x + t \left( 2 \langle w, x\rangle x  - (1+|x|^2)w \right).
$$
The derivative of this expression at $t=0$ (which is just the coefficient of $t$) gives us the infinitesimal generator: it  is an ``infinitesimal boost'' of the form \eqref{infinitesimalflow} with $Z = -2w$ and $A = 0$.  Next, recall that the infinitesimal generators corresponding to the rotation components are flows of the form $\dot x = Ax$ for an antisymmetric matrix $A$. Together with the infinitesimal boosts we then get all flows of the form \eqref{infinitesimalflow}.  The group $G$ acts on the space $X$ in the natural way (component by component) and the infinitesimal generators of this group action on $X$ are flows of the form (1) with all $A_i$ identical.  Therefore, by the general philosophy discussed earlier, the evolution of any initial point $p \in X$ under the system (1) with all $A_i = A$ lies in the group orbit $Gp$.

\section{\label{sec:level1}Reduced Equations}
The given Kuramoto system has $N (d-1)$ degrees of freedom, for some large $N$. However, since the flow of the system is determined via an action of the $d(d+1)/2$ dimensional Lie group $G$, we can alternatively study the auxiliary dynamical system on $G$, which we call the reduced equations. By ignoring rotations, we can further restrict our attention to a system on the $d$-dimensional quotient $G/SO(d) \cong B^d$. The dimensional reduction not only makes the reduced equations easier to analyze than the original Kuramoto system, but the reduced equations require fewer computational resources to numerically integrate.

Now suppose all the terms $A_i$ in (1) are equal.  Fix a base point $p = (p_1,  \dots, p_N) \in X$.  Then if the points  $p_i$ are in sufficiently general position, every element in the $G$-orbit of $p$ can be expressed uniquely as $g p$ for some $ g \in G$, with parameters $w, z$ and $\zeta$.  We wish to derive the corresponding evolution equations for $w, z$ and $\zeta$.  Let $(x_i(t))$ be any solution to (1) in the group orbit $Gp$; we do not require that the initial point $(x_i(0) )= p$.  Then for $i = 1, \dots, N$ we have $x_i(t) = g_t (p_i)$ for a unique $g_t \in G$, which determines the parameters $w, z, \zeta$ as functions of $t$.  Now consider the equation \eqref{infinitesimalflow}, with coefficients $A$ and $Z$ evaluated at  $(x_i(t))$.  This is a non-autonomous ODE on $\overline {B^d}$, and its time-$t$ flow must be given by some $\tilde g_t \in G$.  This ODE has solutions $x_i(t) = g_t (p_i) = g_t (g_0^{-1}( x_i(0)))$, which implies that $\tilde g_t = g_t g_0^{-1}$.

So for any $y_0 \in \overline {B^d}$,
$$
y(t) = g_t ( g_0^{-1} (g_0 (y_0)) ) = g_t (y_0) = \zeta M_w (y_0) = M_{-z}(\zeta y_0)
$$ 
must satisfy the ODE \eqref{infinitesimalflow}
with $A$ and $Z$ evaluated  at $(x_i(t))$ at time $t$.  In particular, if we let $y_0 = 0$, then $y(t) = -\zeta w = z$, so $z$ satisfies the ODE \eqref{infinitesimalflow}.

Now expand $y = \zeta M_w(y_0) = M_{-z} (\zeta y_0)$ to first order in $y_0$, using the variables $z$ and $\zeta$:
$$ 
y \approx z + (1-|z|^2) \zeta y_0,
$$
so
$$
\dot y \approx \dot z -2 \langle \dot z, z \rangle \zeta y_0 + (1-|z|^2) \dot \zeta y_0.
$$
On the other hand, \eqref{infinitesimalflow} gives

\begin{align*}
\dot y &= A y + {1 \over 2} \left( 1 + | y|^2 \right) Z - \langle Z, y \rangle y \cr
&\approx Az + {1 \over 2} (1+|z|^2) Z - \langle Z, z\rangle z \cr
&+(1-|z|^2) \Bigl(A \zeta y_0 + \langle z, \zeta y_0 \rangle Z -\langle Z, z \rangle \zeta y_0 - \langle Z, \zeta y_0 \rangle z \Bigr).
\end{align*}
Setting $y_0 = 0$ gives the $\dot z$ equation
\begin{equation}
\dot z = Az + {1 \over 2} (1+|z|^2) Z - \langle Z, z\rangle z \label{zdot}
\end{equation}
as expected, and since $\langle Az, z \rangle = 0$, this in turn implies that 
$$
\langle \dot z, z \rangle = {1 \over 2} (1-|z|^2) \langle Z, z \rangle.
$$
 Equating the $y_0$ terms, factoring out $1-|z|^2$ and canceling the common term $\langle Z, z \rangle \zeta y_0$ gives
$$
\dot \zeta y_0 = A \zeta x_0 + \langle z, \zeta y_0 \rangle Z - \langle Z, \zeta y_0 \rangle z.
$$
Together, the last two terms above define a special type of antisymmetric operator of $\zeta y_0$:  Given any $y_1, y_2 \in {\Bbb R}^d$, define the antisymmetric operator $\alpha$ as
$$
\alpha (y_1, y_2)y = \langle y_1, y) y_2 - \langle y_2, y) y_1;
$$
this operator has range = ${\rm span}(y_1, y_2)$ providing $y_1$ and $y_2$ are linearly independent; otherwise $\alpha(y_1,y_2) = 0$.  Then for all $y_0 \in {\Bbb R}^d$,
$$
\dot \zeta y_0= A \zeta y_0 + \alpha (z, Z) \zeta y_0
$$
and therefore
$$
\dot \zeta = (A + \alpha (z, Z)) \zeta.
$$

Differentiating $z  = -\zeta w$ gives
$$
Az +  {1 \over 2} (1+|z|^2) Z - \langle Z, z\rangle z = - \zeta \dot w - \dot \zeta w
$$
so 

\begin{align*}
\zeta \dot w &=  (A + \alpha (z, Z)) z - Az - {1 \over 2} (1+|z|^2) Z + \langle Z, z\rangle z \cr
&= Az +|z|^2 Z- \langle Z, z\rangle z -Az -  {1 \over 2} (1+|z|^2) Z + \langle Z, z\rangle z \cr
&= -{1 \over 2} (1-|z|^2) Z;
\end{align*}
hence
\begin{equation}
\dot w = -{1 \over 2} (1-|w|^2) \zeta^{-1} Z.
\label{wdot}
\end{equation}

Summing up, the evolution equations for the $(z, \zeta)$ coordinate system on $Gp$ are
\begin{subequations}
\begin{equation}
\dot z = Az + {1 \over 2} (1+|z|^2) Z - \langle Z, z\rangle z
\end{equation}
\begin{equation}
\dot \zeta =  (A + \alpha(z, Z)) \zeta,
\end{equation}
\label{zzeta}
\end{subequations}
with $A$ and $Z$ evaluated at $M_{-z} (\zeta p)$, and for the $(w, \zeta)$ coordinate system on $Gp$ are

\begin{subequations}
\begin{equation}
\dot w = -{1 \over 2} (1-|w|^2) \zeta ^{-1} Z
\end{equation}
\begin{equation}
\dot \zeta =  (A - \alpha(\zeta w, Z)) \zeta,
\end{equation}
\label{wzeta}
\end{subequations}
with $A$ and $Z$ evaluated at $\zeta M_w(p)$.  Note that these equations generalize the evolution equations for the parameters $w$ and $\zeta$ given in Chen et al.\cite{chen2017hyperbolic} for the classic case $d=2$.


\section{\label{sec:level1} Comparison of Z Versus W Coordinates}

The $\dot z$ equation~\eqref{zdot} is an extension of the system equation \eqref{governingeqn} on $S^{d-1}$. However, for finite $N$, the $\dot z$ equation does not uncouple from $\zeta$, since $Z$ is evaluated at $M_{-z}(\zeta p)$.  The exception to this is in the infinite-$N$ limit:  if the base point $p$ is now the uniform density on $S^{d-1}$, then $\zeta p = p$ (the uniform density is invariant under rotations) and the density $M_{-z}(p) $ is a hyperbolic Poisson density on $S^{d-1}$ whose centroid is a function of $z$.  In the case $d = 2$, this Poisson density has centroid $z$. Unfortunately this simple relationship is false for $d \ge 3$ (we will give more details on this in the next section).

The advantage of the $\dot w$ equation~\eqref{wdot} is that for an order parameter function of the form
$$
Z = \sum _{i=1}^N a_i x_i, 
$$
with $a_i \in \Bbb R$, $\zeta$ drops out of the $\dot w$ equation and we get the reduced equation
$$
\dot w =  -{1 \over 2} (1-|w|^2) Z(M_w(p)). 
$$
The parameter $w$ essentially defines the ``phase relations'' among the $x_i$; two configurations have the same $w$ if and only if they are related by a rotation.  So $w$ is the key parameter that determines whether the system is approaching synchrony or incoherence.

The $w$ variable also has a nice invariance under change of base points.  Suppose $p' = M(p) \in Gp$; then we have coordinates $w', \zeta'$ associated to the base point $p'$.  Any $q \in Gp$ has two expressions
$$
q  = \zeta M_w(p) = \zeta' M_{w'} p' = \zeta' M_{w'} (M(p)).
$$
Assuming the coordinates of $p$ are in sufficiently general position, this implies $\zeta M_w = \zeta' M_{w'} \circ M$, and hence
$$
0 = \zeta M_w(w) = \zeta' M_{w'} (M(w)).
$$
But the unique solution to $M_{w'}(y) = 0$ is $w'$, and hence $w' = M(w)$.  In other words, the coordinates $w$ and $w'$ transform exactly as the base points $p$ and $p'$. 


\section{\label{sec:level1}Continuum Limit}

Next, we consider the dynamics of the Kuramoto model \eqref{governingeqn} in the limit $N \to \infty$.  Let us assume first that the rotation terms $A_i = A$ are constant across the population, corresponding to identical ``oscillators.'' Later we will consider the case where $A$ varies depending on some distribution.  

Let us also assume that the order parameter $Z$ is is proportional to the centroid of the population: 
$$
Z = {K \over N} \sum_{i = 1}^N x_i
$$
In the continuum limit, a state of the system is a probability measure $\rho$ on $S^{d-1}$, and the order parameter becomes 
$$
Z = K \int_{S^{d-1}} x \, d \rho(x).
$$
The measure $\rho$ evolves according to the  continuity equation (also known as the noiseless Fokker-Planck equation) associated to the flow in (1).  Naturally, this flow must preserve group orbits under the action of $G$.  Recall that if $M \in G$, then the measure $M_\ast \rho$ is defined by the adjunction formula
$$
\int_{S^{d-1}} f (x) \, d (M_\ast \rho)  (x) = \int_{S^{d-1}} f(M (x))  \, d \rho (x) .
$$

In particular, we can consider the $G$-orbit of the uniform probability measure $\sigma$ on $S^{d-1}$. This orbit is special; whereas a typical group orbit $G \rho$ has dimension equal to the dimension of $G$, namely $d(d+1)/2$, the orbit $G \sigma$ has dimension only $d$.  This is because the stabilizer of $\sigma$ is $SO(d)$; any rotation fixes $\sigma$, whereas the boosts deform $\sigma$.  Hence the orbit $G \sigma$ has dimension $d$.  Any element in $G \sigma$ can be written as $(M_{-z})_\ast \sigma$, with $z \in B^d$.  The evolution equation for $z$ is \eqref{zdot}, with
\begin{equation}
Z(z) = K \int_{S^{d-1}} x \, d (M_{-z})_\ast \sigma(x) = K \int_{S^{d-1}} M_{-z} (x) \, d \sigma(x). 
\label{Zz}
\end{equation}
In the case $d =2$ with $x = \zeta \in S^1$, we have
$$
d\sigma(\zeta) = {1 \over 2 \pi i } {d \zeta \over \zeta},
$$
so the integral 
$$
Z(z)  = {K \over 2 \pi i} \int _{S^1} {\zeta+z \over 1+\overline z \zeta } \cdot {d\zeta \over \zeta} = K {\zeta+z \over 1+\overline z \zeta } \Biggr |_{\zeta = 0}  = K z
$$
by the Cauchy integral formula.  Therefore \eqref{zdot} simplifies to the equation
$$
\dot z = i \omega z + {K \over 2} (1-|z|^2) z
$$
when $d=2$.  Unfortunately, the formula $Z(z) = Kz$ is not correct for $d \ge 3$; though as we shall see later, this formula \emph{is} correct in higher dimensions for the complex hyperbolic model in even dimensions, which we discuss in the next section.  For $d=2$ the two geometries agree, which explains the coincidence for $d=2$. 

Any Riemannian manifold $X$ has a Laplace-Beltrami operator $\Delta$ associated to its metric; functions $f$ on $X$ satisfying the equation $\Delta f = 0$ are called \emph{harmonic}.  For functions on the ball $B^d$ with the hyperbolic metric, this operator is
$$
\Delta_{hyp} = (1-|x|^2)^2 \Delta _{euc} +2(d-2)(1-|x|^2) \sum_{i=1}^d x_i {\partial \over \partial x_i},
$$
where 
$$
\Delta_{euc} = \sum_{i=1}^d {\partial^2 \over \partial x_i^2}
$$
is the standard Laplace operator (see Stoll\cite{s}, Chapter 3).  We will call solutions to the equation $\Delta_{hyp}f = 0$ \emph{hyperbolic harmonic functions}; for $d=2$ these coincide with ordinary (Euclidean) harmonic functions.  We can consider the hyperbolic analogue of the classical Dirichlet problem: given a continuous function $f$ on $S^{d-1}$, extend $f$ to a hyperbolic harmonic function $\tilde f$ on $B^d$.  Assuming this problem has a unique solution, then for any rotation $\zeta \in SO(d)$ we must have $\widetilde {f \circ \zeta} = \tilde f \circ \zeta$, since rotations preserve the hyperbolic metric.  If we average $f \circ \zeta $ on $S^{d-1}$ over all rotations $\zeta \in SO(d)$ we get the constant function
$$
f_{ave}  = \int_{S^{d-1}} f(x) \, d \sigma(x)
$$
on $S^{d-1}$, and any constant is hyperbolic harmonic on $B^d$.  Therefore the average on $B^d$ of $\widetilde {f \circ \zeta}= \tilde f \circ \zeta$ over all $\zeta \in SO(d)$ must be the constant $f_{ave}$.  But  $\tilde f(\zeta (0)) = \tilde f(0)$ for all $\zeta$, so we must have
$$
\tilde f (0) =\int_{S^{d-1}} f(x) \, d \sigma(x).
$$
Now let $z \in B^d$; since $M_{-z}$ preserves the hyperbolic metric, we must have $\widetilde{f \circ M}_{-z}= \tilde f \circ M_{-z}$, which implies
\begin{align*}
\tilde f(z) &= \widetilde{f \circ M}_{-z} (0) 
\\&=  \int_{S^{d-1}} f(M_{-z}(x)) \, d \sigma(x) 
\\&= \int_{S^{d-1}} f(x) \, d ((M_{-z})_\ast \sigma) (x).
\end{align*}
As shown in Chapter 5 in Stoll~\cite{s}, the measure $(M_{-z})_\ast \sigma$ is given by the formula
$$
d((M_{-z})_\ast \sigma) (x) = P_{hyp} (z, x) \, d \sigma (x),
$$
with hyperbolic Poisson kernel function
\begin{equation}
P_{hyp} (z,x) = \left ( {1-|z|^2 \over |z-x|^2 } \right)^{d-1}. 
\label{hyperbolicPoisson}
\end{equation}
Thus the solution to the hyperbolic Dirichlet problem with boundary function $f$ on $S^{d-1}$ is given by the hyperbolic Poisson integral
$$
\tilde f(z) = \int_{S^{d-1}} P_{hyp}(z,x) f(x) \, d \sigma(x), \quad z \in B^d.
$$
The orbit $G \sigma$ consists of all \emph{hyperbolic}  Poisson measures $P(z,x) \, d \sigma (x)$, parametrized by $z \in B^d$.  By contrast, the Euclidean Poisson kernel function is
$$
P_{euc} (z,x) ={1-|z|^2 \over |z-x|^d },
$$
so the hyperbolic Poisson measures agree with the Euclidean Poisson measures only if $d=2$.

Now we can calculate the expression $Z(z)$ in the general case $d \ge 2$.  We see from \eqref{Zz} that $Z(z)$ is the hyperbolic Poisson integral of the function $Kx$ on $S^{d-1}$.  The function $Kx$ is (Euclidean) harmonic and homogeneous of degree 1 on ${\Bbb R}^d$; following the recipe in Chapter 5 in Stoll~\cite{s}, we see that its extension from $S^{d-1}$  to a hyperbolic harmonic function on $B^d$ is given by
\begin{equation}
Z(z) = K  {F(1, 1-d/2; 1+d/2; |z|^2) \over  F(1, 1-d/2; 1+d/2; 1) }z, 
\label{Zevaluated}
\end{equation}
where $F$ is the hypergeometric function
$$
F(a, b; c; t) = \sum_{k=0}^\infty { (a)_k (b)_k \over (c)_k } {t^k \over k!},
$$
with $(a)_0 = 1$ and $(a)_k = a(a+1) \cdots (a+k-1)$ for $k \ge 1$.  Notice that if $a$ or $b = 0$, then $F(a, b;c;t) = 1$; this gives $Z(z) = Kz$ for $d=2$, as expected.
\section{\label{sec:level1}Complex Case}
There is an alternative generalization of Kuramoto networks to higher-dimensional oscillators when $d = 2m $ is even.  Then ${\Bbb R}^d = {\Bbb C}^m$, and we can study systems of the form
\begin{equation}
 \dot x_j = A_jx + Z - \langle  x_j , Z \rangle x_j, \quad i = 1, \dots, N,
\label{complexflow}
\end{equation}
 where now $x_i $ is a point on the unit sphere $S^{2m-1} \subset {\Bbb C}^m$, $A_i$ is an anti-Hermitian $m \times m$ complex matrix, $Z \in {\Bbb C}^m$ and $\langle, \rangle$ denotes the complex-valued Hermitian inner product.  These systems are the same as the real case when $d=2, m=1$ but are different for $m \ge 2$.  To see this, suppose
$$
 Ax +Y - \langle x, Y\rangle_{\Bbb R} \, x = Bx + Z - \langle x, Z \rangle_{\Bbb C} \, x
 $$
for all $x \in S^{2m-1} \subset {\Bbb C}^m = {\Bbb R}^d$, where $A$ is antisymmetric, $B$ is anti-Hermitian, $Y, Z \in {\Bbb C}^m$ and we use the subscripts $\Bbb R$ and $\Bbb C$ to distinguish the real and complex inner products.  Then
 $$
(A-B)x = Z-Y + \Bigl(  \langle x, Y\rangle_{\Bbb R} -  \langle x, Z \rangle_{\Bbb C} \Bigr ) x
$$
and so $(A-B)(-x) = (A-B) x$ for all  $x \in S^{2m-1}$, which implies $A = B$. This implies
$$
Y-Z =  \Bigl (\langle x, Y\rangle_{\Bbb R} - \langle x, Z \rangle_{\Bbb C} \Bigr ) x
$$
for all $x \in S^{2m-1}$, hence $Y-Z \in {\rm span}_{\Bbb C} (x)$ for all $x \in {\Bbb C}^m$; if $m \ge 2$, this implies $Y = Z$.  But then we have
$$
\langle x, Y\rangle_{\Bbb R} = \langle x, Y \rangle_{\Bbb C}
$$
for all $x \in {\Bbb C}^m$, which can only hold if $Y = 0$.  Hence for $m \ge 2$, the only flows simultaneously of the form (1) and \eqref{complexflow} have $Z = 0$ and $A$ anti-Hermitian.

Flows of the form \eqref{complexflow} are related to the complex hyperbolic geometry on the complex unit ball $B^m$ with the Bergman metric (see Rudin\cite{r80}, Chapter 1).  The orientation-preserving isometries of this metric are generated by unitary transformations $\zeta \in U(m)$ and boost transformations of the form
\begin{align*}
 M_w(x) &= { \sqrt{1-|w|^2} \, x  +  \left( {   \langle x, w\rangle \over 1+ \sqrt{1-|w|^2} } - 1 \right ) w \over 1 - \langle x, w \rangle} \\&=  { x - w  +   {   \langle x, w\rangle  w- |w|^2 x \over 1+ \sqrt{1-|w|^2} } \over 1 - \langle x, w \rangle}.
 \end{align*}
 Notice that when $m = 1$, this reduces to the standard complex M\"obius map $M_w$.  As in the real case $M_0$ is the identity, $M_w^{-1} = M_{-w}$, $M_w(w) = 0$ and $M_w(0) = -w$.  Any orientation-preserving isometry of $B^d$ can be expressed uniquely in the form
$$
g(x) = \zeta M_w(x) = M_{-z} (\xi x),
$$
where $w, z \in B^m$ but now $\zeta, \, \xi \in U(m)$, the complex unitary group.  Linearizing at $x =0$ gives
\begin{align*}
g(x) &\approx \zeta \left ( -w -\langle x, w \rangle w +  \sqrt{1-|w|^2} \,  x +  {   \langle x, w\rangle w \over 1+ \sqrt{1-|w|^2} } \right )  \cr
&\approx  \zeta \left ( -w  +  \sqrt{1-|w|^2} \,  x -  {  \sqrt{1-|w|^2} \langle x, w\rangle w   \over 1+ \sqrt{1-|w|^2} } \right )  \cr
& \approx  z  + \sqrt{1-|z|^2} \, \xi x -  {   \sqrt{1-|z|^2} \langle  \xi x, z \rangle z \over 1 + \sqrt{1-|z|^2}}
\end{align*}
which implies $z = - \zeta w$ (hence $|z| = |w|$) and $\xi = \zeta$, as before.
\begin{figure*}
\includegraphics[scale=0.6]{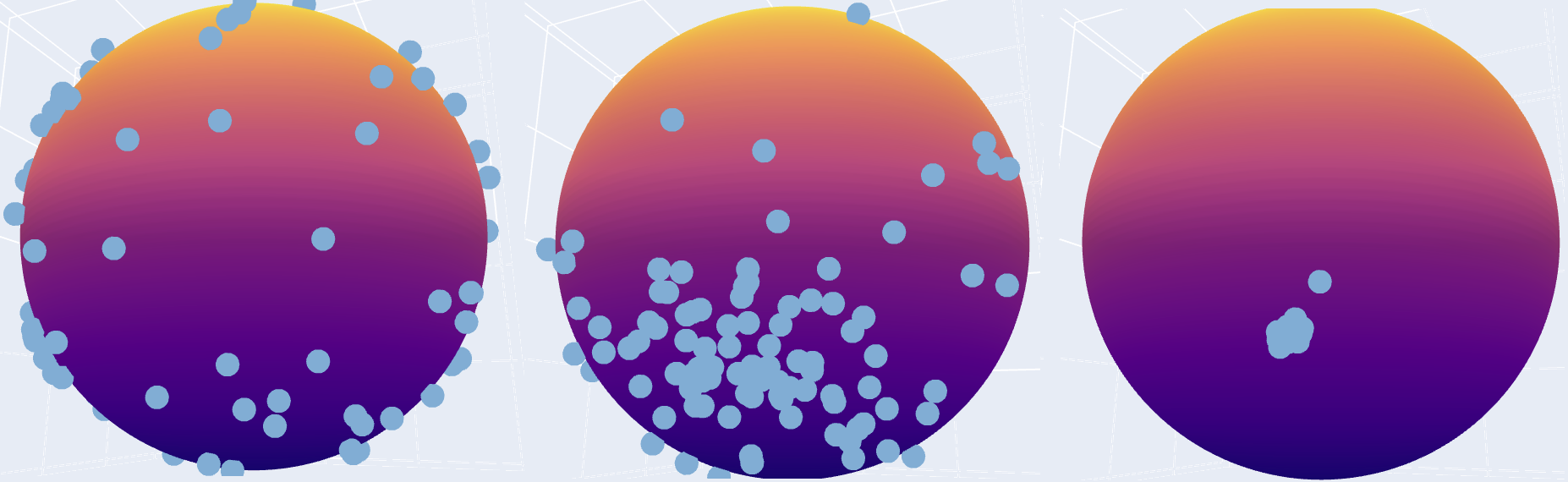}
\caption{\label{fig:epsart} A first-order linear Kuramoto system on the two-dimensional sphere $S^2$ with equal weights $a_i=1/N$, and randomly chosen initial conditions. The states shown are at $t = 0, t = 10,$ and $t = 40$ respectively. This simulation was written in Python and visualized with Plotly.}
\end{figure*}
The corresponding infinitesimal transformations are given by flows on the complex unit ball $B^m$ of the form
\begin{equation}
\dot y = Ay + Z -  \langle y, Z \rangle y, \label{yflow}
\end{equation}
with $A$ anti-Hermitian $m \times m$ and $Z \in {\Bbb C}^m$.  This flow extends to a flow on $S^{2m-1}$ of the form in \eqref{complexflow}.  Note the absence of the quadratic term $|y|^2 Z$ here. To derive these infinitesimal transformations, we can apply our prior power series expansion, noting that $|x|=1$ to obtain  
$$
M_{tw} (x) \approx {x -  tw \over 1- t \langle x, w \rangle} \approx x +  2\left(  \langle x, w \rangle x   -w \right)t.
$$
So the infinitesimal generator is an ``infinitesimal boost'' of the form  \eqref{yflow} with $Z = -\frac{1}{2}w$ and $A = 0$.  The infinitesimal generators corresponding to the rotation components are flows of the form $\dot x = Ax$ with $A$ anti-Hermitian; together with the infinitesimal boosts we get all flows of the form~\eqref{yflow}.  

\section{\label{sec:level1}Relation to Previous Research}
Many of the results above can be found in some form in the work of earlier authors.~\cite{lohe2009non,tanaka2014solvable,chi2014emergent, ha2018relaxation, lohe2018higher, jacimovic2018low, chandra2019continuous, chandra2019complexity, lohe2019systems, ha2021constants, jacimovic2021reversibility} Three papers in particular overlap considerably with the present work.

Tanaka~\cite{tanaka2014solvable} demonstrates in his 2014 paper that the dynamics of \eqref{governingeqn} can be reduced using Möbius transformations that fix the unit ball, similar to what Marvel, Mirollo, and Strogatz found\cite{marvel2009identical} for the traditional Kuramoto model. Tanaka writes his M\"obius transformations differently from ours, but he uses the same group of transformations and he also gets reduced equations for his M\"obius parameters. Tanaka's equation (10b) looks similar to our $\dot z$ equation~\eqref{zdot}, except without the $|z|^2$ term, which is puzzling. He does not mention the reduction down to dimension $d$ in the finite-$N$ case that we get with the $\dot w$ equation~\eqref{wdot}. Tanaka also notes that the complex case when $d = 2m$ is different, and generalizes the Ott-Antonsen residue calculation to this case, which is the highlight of his paper.  In the real case, Tanaka's equation (15) is similar to our equation \eqref{Zevaluated}, though we were not able to show that the two expressions are equivalent. Finally, Tanaka also presents a generalization of the Ott-Antonsen reduction\cite{ott2008low} for the complex version of the system. 

Lohe~\cite{lohe2018higher} also looks at the same system as (1) (see his equation (22)) and he derives a similar reduction as ours by using Möbius transformations for the finite-$N$ model. His transformation (30) on $S^{d-1}$ is our $M_w$ (with $v = w$) and his equation (31) is the same as our $\dot z$ equation~\eqref{zdot}.  He also has something that looks like the $\dot w$ equation~\eqref{wdot}, which he says is independent of the rest of the reduced system for (in our notation) an order parameter function of the form
$$
Z = {1 \over N} \sum_{i = 1}^N \lambda_i Q_ix_i,
$$
where $Q_i \in O(d)$ and $\lambda_i \in \Bbb R$.  But such a $Z$ does not satisfy the identity $ \zeta Z(p) = Z(\zeta p)$ for all rotations $\zeta$, unless $Q_i = \pm  I$, so we do not see how the $\zeta$ term cancels. 

Lohe's map $M$ in his equation (55) (ignoring the $R$ factor) agrees with our map $M_{-v}$ on the sphere $S^{d-1}$, but not on the ball $B^d$.  So it is not a M\"obius transformation of the type we are using.  For example, $M(-v) = v$ whereas $ M_{-v} (-v) = 0$.  We are not sure why Lohe~\cite{lohe2018higher} prefers these maps over the boosts; he claims that $M$ preserves cross-ratios, but we do not see why this is advantageous.  His map $F$ in equation (63) (again ignoring the $R$ factor) is exactly our $M_{-v}$.

Chandra, Girvan, and Ott~\cite{chandra2019complexity} concentrate on the infinite-$N$ or continuum limit system, and derive a dynamical reduction for a special class of probability densities on $S^{d-1}$, generalizing the Poisson densities used in the Ott-Antonsen reduction. They proceed directly to the infinite-$N$ version of (1). They make a very clever guess (their equation (7)) of the form of the special densities that generalize the Poisson densities for $d=2$, and then calculate the exponent in the denominator of their expression, getting exactly the hyperbolic Poisson kernel densities  in~\eqref{hyperbolicPoisson} above. Their equation (15) is exactly the same as our $\dot z$ equation~\eqref{zdot} in the infinite-$N$ limit.  The integral in their equation (19) can be evaluated, as shown above in~\eqref{Zevaluated}.

\section{\label{sec:level1}An Example:  First-Order Linear Order Parameter Gives Gradient System}

We conclude with an analysis of the system (1) with a weighted order parameter 
\begin{equation}
Z = \sum_{i=1}^N a_i x_i, \label{orderparameter}
\end{equation}
where the $a_i$ are real constants.  

\begin{figure*}
\includegraphics[scale=0.7]{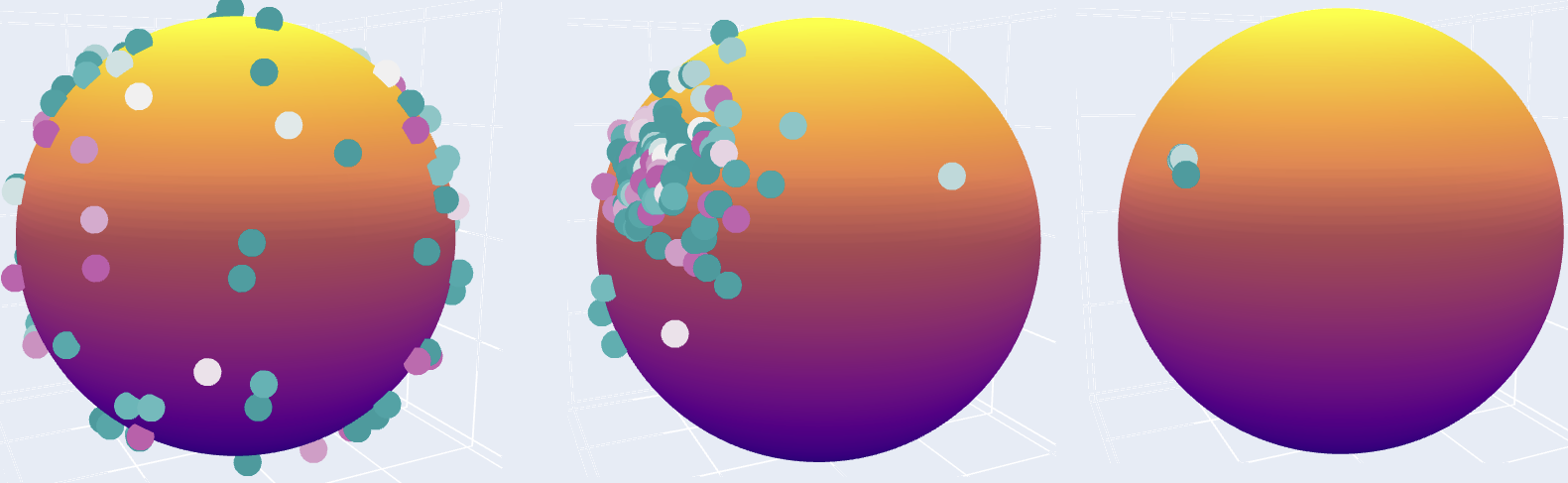}
\caption{\label{fig:epsart} A first-order linear Kuramoto system on $S^2$ with weights distributed according to a Riemann sum which approximates the integral of a normal distribution, and randomly chosen initial conditions. Pink particles contribute to the order parameter with greater weights than the blue particles do. The states shown are at $t = 0, t = 10,$ and $t = 40$ respectively.}
\end{figure*}
\begin{figure*}
\includegraphics[scale=0.7]{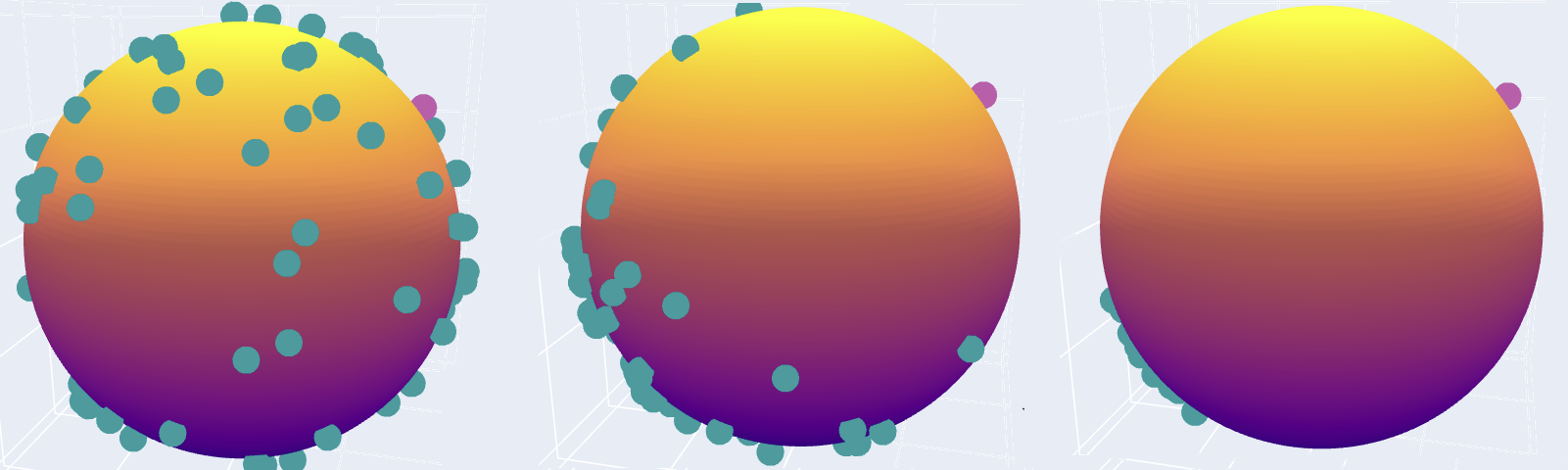}
\caption{\label{fig:epsart} A first-order linear Kuramoto system on $S^2$ with a majority cluster, where one particle is chosen to have a weight which exceeds the combined weights of all other particles (or equivalently, where all the particles have equal weight but a majority of them cluster into a single point and therefore act if they were a single giant particle; hence the name ``majority cluster''). The states shown are at $t = 0, t = -10,$ and $t = -40$ respectively; we have chosen to depict time running backward to highlight that the backwards-time limit tends toward an antipodal configuration. In this simulation, one particle, depicted in pink, was chosen to have a weight of $0.6$, and the remaining $99$ particles, depicted as blue, were chosen to have equal weights of $0.4/99$.}
\end{figure*}
\subsection{\label{sec:level2}A Computer Visualization}

Underlying all of the mathematics presented is a system of particles on a sphere which coalesce when certain conditions are met. To visualize this, we implemented the Runge-Kutta algorithm to numerically solve the Kuramoto model on the 2-dimensional sphere in 3-space, corresponding to the case $d=3$ in \eqref{governingeqn}.  For simplicity, we simulated $N = 100$ particles with equal weights ($a_i = 1/N$ in the order parameter $Z$) and  set the rotation term $A$ to zero for all the particles, a choice that is tantamount to ignoring the rotational influence, or equivalently, rotating the frame of reference along with the entire system as it evolves. 

In the simulation shown in Fig.~1, randomly chosen points on the sphere were used as initial conditions. As time increases, one can see that the particles coalesce to a limit point, mimicking the spontaneous synchronization that is well known for the traditional Kuramoto model ($d=2$) when the oscillators are identical.  

Later in this section, we will prove this synchronization behavior holds more generally for Kuramoto models on the sphere having weighted order parameters of the form \eqref{orderparameter}, provided the weights $a_i$ are all positive and satisfy an upper bound. Specifically, the forward-limit synchronization behavior remains regardless of how the particles are weighted, provided the $a_i$ sum to $1$ and no individual weight exceeds $1/2$, as we later prove. 

Figure 2 shows a simulation in which we weighted each particle according to the terms in a Riemann sum approximating the integral of the normal probability distribution. The pink particles, which have higher weights, exert greater influence over the final synchronization location of the particles, but there is still synchronization.

When time runs backwards, almost all initial conditions of the particles will tend towards a limiting configuration where their centroid is at the origin. The exception is when we have a majority cluster, depicted in Fig. 3, where one particle has a weight which exceeds the weight of all other particles. When this occurs, it is impossible to arrange the particles so their weighted centroid is at the origin, so the backwards time limit will tend towards an antipodal configuration, where all particles not in the majority cluster will coalesce around the antipode of the cluster. This is the configuration which minimizes the magnitude of the weighted centroid. We do not include the proof that the backwards-time limit is antipodal in this case, but it is a straightforward generalization of the result which we do prove below.

\subsection{\label{sec:level2}Existence of Hyperbolic Gradient}
As mentioned above, if $Z$ has the form in \eqref{orderparameter}, then the $\dot w$ equation in \eqref{wdot} reduces to
\begin{equation}
\dot w = -{1 \over 2} (1-|w|^2) Z(M_w(p)), \label{flow}
\end{equation}
independent of the parameter $\zeta$.  We will show that this is a gradient flow on the unit ball $B^d$ with respect to the hyperbolic metric.  In the presence of a Riemannian metric we can associate a $1$-form to any vector field, and the vector field is gradient if and only if the associated $1$-form is exact; since the unit ball is simply connected, this holds if and only if the associated 
$1$-form is closed.  For the Euclidean metric on $B^d$ (or any open subset of ${\Bbb R}^d$) and standard coordinates $w_1,  \dots, w_d$, the $1$-form associated to the vector field with components $f_1,  \dots, f_d$ is
$$
\omega = f_1 \, dw_1 + \cdots + f_d \, dw_d.
$$
If we scale the Euclidean metric by a positive smooth function $\phi$, then the associated $1$-form with respect to the metric $ds = \phi |dw|$  is then
$$
\omega = \phi^2 ( f_1 \, dw_1 + \cdots + f_d \, dw_d).
$$
Therefore the gradient of a function $\Phi$ with respect to this scaled metric is given by
$$
\nabla \Phi = \phi ^{-2} \nabla_{euc} \Phi,
$$
where $\nabla_{euc}$ denotes the ordinary Euclidean gradient operator. We have $\phi(w) = 2(1-|w|^2)^{-1}$ for the hyperbolic metric, so the hyperbolic gradient operator on $B^d$ is given by
$$
\nabla_{hyp} \Phi (w) = {1 \over 4} (1-|w|^2)^2 \nabla_{euc} \Phi (w).
$$

Now let's consider the vector field $V$ defined by \eqref{flow}.  By linearity, it suffices to treat the case $Z = x_i$, and we can take $i=1$ without loss of generality.   Then the associated $1$-form is

\begin{align*}
\omega &= {4  \over ( 1- |w|^2)^2 } \left( -{1 \over 2} (1-|w|^2)  \right) \\ &\cdot \sum_{j=1}^d  \left( {(1-|w|^2) (p_{1,\, j}  - w_j)  \over | p_1 - w|^2} - w_j \right) \, dw_j \cr \\
&= -2  \sum_{j=1}^d  \left(  {p_{1,\,  j}  - w_j  \over | p_1 - w|^2 } - { w_j  \over 1-|w|^2} \right) \, dw_j 
\end{align*}
where $p_{1, \, j}$ denotes the $j$th component of the point $p_1 \in S^{d-1}$.  Let  $E_j$ denote the coefficient of $dw_j$ in parentheses above; then
$$
d \omega = -2 \sum_{j, k = 1}^d {\partial E_j \over \partial w_k} \, dw_k \wedge dw_j.
$$
 Applying the chain and quotient rules gives
$$
{\partial E_j \over \partial w_k} = { 2(p_{1,\, j}  - w_j) (p_{1,\, k}  - w_k) \over |p_1-w|^4} + {2 w_j w_k \over (1-|w|^2)^2}
$$
for $j \ne k$, which is symmetric in $j$ and $k$; hence the sum above for $d \omega$ simplifies to $d \omega = 0$. Thus $\omega$  is closed and we see that the flow~\eqref{flow} is gradient for any order parameter function of the form~\eqref{orderparameter}.

Next, we show that the hyperbolic potential for $V$, up to an additive constant, is given by
\begin{equation}
\Phi(w)   = \sum\limits_{i = 1}^N a_i \log{1 - |w|^2 \over |w - p_i|^2} ={1 \over d-1} \sum\limits_{i = 1}^N a_i \log P_{hyp}(w, p_i).\label{potential}
\end{equation}
Here we follow the convention that the potential  \emph{decreases} along trajectories, so we are asserting  that $\nabla_{hyp} \Phi = -V$.   To derive this, use the identity $\nabla_{euc} |w-w_0|^2 = 2(w-w_0)$, for any constant vector $w_0 \in {\Bbb{R}}^d$.
Then
\begin{align*}
\nabla_{euc} \Phi (w) &=  \sum\limits_{i = 1}^N a_i \left( - {2w \over 1-|w|^2} - {2(w-p_i) \over |w-p_i|^2} \right) \cr
&=   {2 \over 1-|w|^2} \sum\limits_{i = 1}^N a_i \left(    {(1-|w|^2) (p_i-w) \over |w-p_i|^2} - w \right) \cr
&= {2 \over 1-|w|^2} \sum\limits_{i = 1}^N a_i M_w(p_i) = {2 \over 1-|w|^2} Z(M_w(p)).
\end{align*}
Hence we see that 
$$
\nabla_{hyp}  \Phi  (w) = {1 \over 2} (1 - |w|^2) Z(M_w(p)) = -V(w), $$
as desired. 

\subsection{\label{sec:level2}Analysis of Dynamics}
We can use the existence of the potential $\Phi(w)$ for the flow on $B^d$ to prove a global synchrony result for the system~\eqref{governingeqn} when the coefficients $a_i$ in the order parameter $Z$ are all positive.  Specifically, we assume that  $ 0 < a_i < 1/2$ for all $i$, and $\sum_{i = 1}^N a_i = 1$.  We also assume $N \ge 3$ and all the rotation terms $A_i$ in \eqref{governingeqn} are equal.  Under these conditions, almost all trajectories for (1) converge in forward time to the $(d-1)$-dimensional diagonal manifold $\Delta \subset X$ as $t \to \infty$, meaning that the system self-synchronizes.  In contrast, in backwards time the system tends to an incoherent state having zero order parameter:  as $t \to -\infty$ almost all trajectories for (1) converge to the codimension-$d$ subspace $\Sigma \subset X$ consisting of states with $Z(p) = 0$.

The proof is modeled after Theorem 1 in Chen et al.~\cite{chen2019dynamics} and will be based on two preliminary lemmas.  In each of these lemmas we assume the conditions on the $a_i$ above, and that the base point $p = (p_i)$ for the flow~\eqref{flow} has all distinct coordinates.  

We begin with a general observation about gradient flows in the ball $B^d$: if $w_0 \in B^d$ is any initial condition and $w^\ast \in B^d$ is in the forward limit set $\Omega_+(w_0)$, then $w^\ast$ is a fixed point for the flow.  To see this, let $\Phi$ be a potential for the flow, and suppose $w(t_n) \to w^\ast \in B^d$ for some sequence $t_n \to \infty$. 
Since the potential decreases along trajectories,
$$
\lim_{t \to \infty} \Phi(w(t)) = \lim_{n \to \infty} \Phi(w(t_n)) = \Phi(w^\ast).
$$
Let $F_t$ denote the time-$t$ flow map. If $w^\ast$ is not a fixed point, then for any $s > 0,$
\begin{align*}
\lim_{t \to \infty} \Phi(w(t)) &= \lim_{n \to \infty} \Phi(w(t_n+s)) \\
&= \lim_{n \to \infty} \Phi(F_s ( w(t_n))) 
\\&= \Phi(F_s(w^\ast)) \\ &< \Phi(w^\ast),
\end{align*}
which is a contradiction, so $w^\ast$ must be a fixed point.   (Compact limit sets are connected, so $\Omega_+(w_0)$ cannot consist of two or more but finitely many fixed points; however it is possible that forward or backward limits sets for gradient flows consist of a continuum of fixed points.  We will see that this is not the case for our system on $B^d$.)

\begin{lemma}
Any fixed point for the flow~\eqref{flow} in $B^d$ is repelling.
\end{lemma}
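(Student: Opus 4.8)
The plan is to use the hyperbolic gradient structure established above to turn the stability question into a short Hessian computation, and to exploit the M\"obius covariance of the $w$ coordinate so that this computation can be done at the origin. Recall that the flow \eqref{flow} is the negative hyperbolic gradient of the potential $\Phi$ in \eqref{potential}, so a point $w^\ast\in B^d$ is fixed if and only if $\nabla_{euc}\Phi(w^\ast)=0$ (the conformal factor never vanishing). Writing $V(w)=-\tfrac14(1-|w|^2)^2\nabla_{euc}\Phi(w)$ and differentiating at such a $w^\ast$, the term in which a derivative falls on the conformal factor drops out because $\nabla_{euc}\Phi(w^\ast)=0$, leaving $DV(w^\ast)=-\tfrac14(1-|w^\ast|^2)^2\,\nabla_{euc}^2\Phi(w^\ast)$. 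Hence $w^\ast$ is a source precisely when the Euclidean Hessian $\nabla_{euc}^2\Phi(w^\ast)$ is negative definite, and this is all that must be shown.

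To compute this Hessian I would first move $w^\ast$ to the origin. Given a fixed point $w^\ast$, change the base point from $p=(p_i)$ to $p'=(M_{w^\ast}(p_i))$; since boosts are bijections of $B^d$ fixing $S^{d-1}$, the $p'_i$ are again $N$ distinct points of $S^{d-1}$, and by the base-point covariance of the $w$ coordinate (namely $w'=M(w)$ when $p'=M(p)$) the two reduced flows \eqref{flow} are conjugate via $w\mapsto M_{w^\ast}(w)$, which carries $w^\ast$ to $0$. So it suffices to treat the case $w^\ast=0$ for the flow with base point $p'$; renaming $p'$ as $p$, we have distinct $p_i\in S^{d-1}$ with $\nabla_{euc}\Phi(0)=2\sum_i a_i p_i=Z(p)=0$. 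A direct differentiation of $\nabla_{euc}\Phi(w)=\tfrac{2}{1-|w|^2}\sum_i a_i M_w(p_i)$ at $w=0$, using $|p_i|=1$ and $\sum_i a_i=1$, then gives
\begin{equation*}
\nabla_{euc}^2\Phi(0)=-4\Bigl(I-\sum_{i=1}^N a_i\,p_ip_i^{\mathsf T}\Bigr),\qquad\text{so}\qquad DV(0)=I-\sum_{i=1}^N a_i\,p_ip_i^{\mathsf T}.
\end{equation*}

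It remains to show $\sum_i a_i p_ip_i^{\mathsf T}\prec I$ strictly. Each $p_ip_i^{\mathsf T}$ is the orthogonal projection onto $\mathbb{R}p_i$, so $\sum_i a_i p_ip_i^{\mathsf T}\preceq\bigl(\sum_i a_i\bigr)I=I$, and for a unit vector $v$ the equality $v^{\mathsf T}\!\bigl(\sum_i a_i p_ip_i^{\mathsf T}\bigr)v=1$ would force $\langle p_i,v\rangle^2=1$, i.e.\ $v=\pm p_i$, for every $i$ with $a_i>0$. Since all $a_i>0$, this would place each of the $N$ distinct points $p_i$ in the two-element set $\{v,-v\}$, which is impossible for $N\ge 3$. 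Hence $\sum_i a_i p_ip_i^{\mathsf T}\prec I$, so $DV(0)$ is symmetric positive definite and $w^\ast$ is a hyperbolic repeller. (Equivalently, and globally: $\Phi=-\sum_i a_i b_{p_i}$, where $b_{p_i}$ is the hyperbolic Busemann function based at $p_i$; the classical identity $\mathrm{Hess}_{hyp}\,b_{p_i}=g_{hyp}-db_{p_i}\otimes db_{p_i}$ together with $|db_{p_i}|_{g_{hyp}}=1$ shows that $\Phi$ is strictly geodesically concave on $B^d$, so its hyperbolic gradient flow can have only repelling rest points.)

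The computation itself is routine; the only steps requiring care are the reduction to $w^\ast=0$ via the base-point covariance, and the strict inequality $\sum_i a_i p_ip_i^{\mathsf T}\prec I$, which is the single place where the hypotheses $a_i>0$, $N\ge 3$, and the distinctness of the $p_i$ enter. I do not anticipate a genuine obstacle: the hyperbolic gradient structure collapses what might have been a delicate stability analysis into a one-line positive-definiteness statement.
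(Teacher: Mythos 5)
Your proof is correct and essentially coincides with the paper's: the paper likewise uses the base-point equivariance of the $w$ coordinate to place the fixed point at $0$, expands $M_w(p_i)$ to first order to get the linearization $\dot w = \bigl(I-\sum_i a_i\,p_ip_i^{\mathsf T}\bigr)w$, and shows $\bigl\|\sum_i a_i\,p_ip_i^{\mathsf T}\bigr\|<1$ by exactly your equality-case argument (forcing $p_i=\pm u$, impossible for $N\ge 3$ distinct points with all $a_i>0$). Your only real variation is cosmetic — deriving the same matrix as $-\tfrac14\nabla^2_{euc}\Phi(0)$ via the gradient structure (plus the Busemann-function aside) rather than linearizing \eqref{flow} directly — and it changes nothing essential.
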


\begin{proof}
Suppose $w^\ast \in B^d$ is a fixed point for~\eqref{flow}.  As discussed above, an advantage of using the $w$-parameter is the equivariance with respect to change of base point $p$.  Consequently we can assume $w^\ast = 0$ without loss of generality, so $Z(p) =\sum\limits_{i = 1}^N a_i p_i =  0$.  To first order in $w$,
\begin{align*}
M_w(p_i ) &= { p_i - w \over 1 - 2 \langle w, p_i \rangle } - w \\&= (p_i - w) \bigl (1 + 2  \langle w, p_i \rangle \bigr ) -w \\&= p_i - 2w + 2  \langle w, p_i \rangle p_i.
\end{align*}
The linearization of~\eqref{flow} at the fixed point $w^\ast = 0$ is
\begin{align*}
\dot w &= -{1 \over 2} \sum_{i = 1}^N a_i \Bigl ( p_i - 2w + 2 \langle w, p_i \rangle p_i \Bigr )
\\&= w - \sum_{i = 1}^N a_i   \langle w, p_i \rangle p_i.
\end{align*}

We claim that the linear map
$$
Tw = \sum_{i = 1}^N a_i   \langle w, p_i \rangle p_i
$$
has $||T || <1$; to see this, suppose $|w| = 1$.  Then  $|  \langle w, p_i \rangle p_i | \le 1$ and $Tw$ is a convex combination of the vectors $\langle w, p_i \rangle p_i$.  We can only obtain $| Tw | = 1$  if all terms $\langle w, p_i \rangle p_i = u$ with $| u | = 1$, which implies all $p_i = \pm u$, and this cannot happen if at least three of the $p_i$ are distinct.
Hence $||T|| < 1$ and so the eigenvalues $\mu_i$ of $T$ satisfy $|\mu_i| < 1$.  The eigenvalues for the $\dot w$ linearization are $\lambda_i = 1 - \mu_i$, so wee see that ${\rm Re} \, \lambda_i > 0$ for all $i$, establishing that the fixed point $w^\ast$ is repelling.
\end{proof}

\begin{lemma}
$\displaystyle \lim_{|w| \to 1} \Phi(w) = -\infty.$
\end{lemma}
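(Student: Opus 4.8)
The plan is to produce a uniform upper bound of the form $\Phi(w)\le \beta\log(1-|w|^2)+C$ with a constant $\beta>0$ and a finite constant $C$; since $\log(1-|w|^2)\to-\infty$ as $|w|\to1$, this immediately forces $\Phi(w)\to-\infty$. Using $\sum_i a_i=1$, I would first rewrite the potential~\eqref{potential} as $\Phi(w)=\log(1-|w|^2)-2\sum_{i=1}^N a_i\log|w-p_i|$. The only way a summand can fail to tend to $-\infty$ at the boundary is for $w$ to approach the corresponding $p_i$, in which case $-\log|w-p_i|^2\to+\infty$ competes against $\log(1-|w|^2)\to-\infty$; the whole idea is that at most one such competition can happen at once, and the hypothesis $a_i<1/2$ resolves it in favor of $-\infty$.

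To make this precise, set $\delta_0=\min_{i\ne j}|p_i-p_j|>0$ (positive since the $p_i$ are distinct) and fix $\eta=\tfrac13\delta_0$, shrunk if needed so that $\eta<1$. For any $w\in B^d$, at most one index $i_0$ can satisfy $|w-p_{i_0}|<\eta$, since otherwise two of the $p_i$ would lie within distance $2\eta<\delta_0$ of each other. If $|w-p_i|\ge\eta$ for every $i$, then $-2a_i\log|w-p_i|\le-2a_i\log\eta$ for all $i$, hence $\Phi(w)\le\log(1-|w|^2)-2\log\eta$. If instead $|w-p_{i_0}|<\eta$ for a (necessarily unique) index $i_0$, then for $j\ne i_0$ we have $\delta_0-\eta\le|w-p_j|<2$, so $-2\sum_{j\ne i_0}a_j\log|w-p_j|$ is bounded above by a constant $C_1$ depending only on the $p_i$, giving $\Phi(w)\le\log(1-|w|^2)-2a_{i_0}\log|w-p_{i_0}|+C_1$.

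Now I would invoke the elementary inequality $1-|w|^2\le 2|w-p_{i_0}|$, which holds because $1-|w|=|p_{i_0}|-|w|\le|w-p_{i_0}|$ together with $1+|w|\le2$. Splitting $\log(1-|w|^2)=2a_{i_0}\log(1-|w|^2)+(1-2a_{i_0})\log(1-|w|^2)$ and combining the first piece with $-2a_{i_0}\log|w-p_{i_0}|$ yields $2a_{i_0}\log\frac{1-|w|^2}{|w-p_{i_0}|}\le 2a_{i_0}\log2$, so in this case $\Phi(w)\le(1-2a_{i_0})\log(1-|w|^2)+2a_{i_0}\log2+C_1$. Setting $\beta=1-2\max_i a_i$, which is positive precisely because each $a_i<1/2$, and using $\log(1-|w|^2)\le0$ (so that $\log(1-|w|^2)\le\beta\log(1-|w|^2)$, since $0<\beta\le1$), both cases collapse to $\Phi(w)\le\beta\log(1-|w|^2)+C$ for a single finite constant $C$. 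Letting $|w|\to1$ then completes the argument.

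The \emph{main obstacle} is exactly this term-by-term competition near a vertex $p_{i_0}$: the estimate works only because the pairing of $1-|w|^2\le2|w-p_{i_0}|$ with the bound $a_{i_0}<1/2$ leaves a genuinely negative multiple of $\log(1-|w|^2)$. When the bound $a_i<1/2$ is violated — as in the ``majority cluster'' configuration discussed earlier — the limit can actually fail, which is why this hypothesis appears in the theorem.
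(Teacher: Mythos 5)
Your proof is correct and rests on the same key estimate as the paper's: the reverse triangle inequality $1-|w|\le|w-p_{i_0}|$ combined with the hypothesis $a_{i_0}<1/2$, which resolves the competition between $\log(1-|w|^2)\to-\infty$ and $-2a_{i_0}\log|w-p_{i_0}|\to+\infty$ near a base point. The only difference is packaging: you derive a uniform bound $\Phi(w)\le\beta\log(1-|w|^2)+C$ with $\beta=1-2\max_i a_i$, whereas the paper argues along an arbitrary sequence $w_n\to x\in S^{d-1}$ with the same case split ($x\ne p_i$ versus $x=p_1$); your version is a mild quantitative refinement, not a different route.
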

\begin{proof}
It suffices to show that
$$
\lim_{n \to \infty} \Phi(w_n) = -\infty
$$
for any sequence $w_n \in B^d$ with $w_n \to x \in S^{d-1}$.  The result is clear if $x \ne p_i$: as $n \to \infty$  the terms $|w_n - p_i|$ in the potential  \eqref{potential} are bounded away from $0$, and $1 - |w_n|^2 \to 0$.  So let's say that $w_n \to p_1$.  We rewrite $\Phi(w_n)$ as

\begin{align*}
\Phi(w_n) &= \log (1 - |w_n|^2) - 2 a_1 \log  |w_n - p_1| \\&-2  \sum _{i = 2}^N  a_i \log  |w_n - p_i| \\&= \log(1 - |w_n|) - 2 a_1 \log  |w_n - p_1| \\&+ \log (1 + |w_n|) - 2  \sum _{i = 2}^N  a_i \log  |w_n - p_i|.
\end{align*}
The latter two terms above have finite limit as $n \to \infty$, so we focus on the first two terms.  We have $ 1 - |w_n| \le |w_n-p_1| $, so
$$
 \log (1 - |w_n|) - 2 a_1 \log  |w_n - p_1|  \le (1 -2a_1) \log  |w_n - p_1|  \to -\infty
 $$
as $n \to \infty$, which proves our result.  Notice that we need the assumption $a_i < 1/2$ for this argument.
\end{proof}

\noindent \textbf{Theorem.} \emph{Under the conditions above, almost all trajectories for \eqref{governingeqn} converge to $\Delta$ as $t \to \infty$ and to $\Sigma$ as $t \to -\infty$.}
\begin{proof}
Let $p = (p_1, \dots, p_N) \in X$ be any point with all distinct coordinates.   The points on $Gp$ are parametrized by $w \in B^d$ and $\zeta \in SO(d)$, and the dynamics for these parameters are given by \eqref{wdot}. 
We begin with the dynamics as $t \to -\infty$.  Let $w(t)$ be a trajectory for~\eqref{flow} with initial condition $w_0 \in B^d$, and consider the backward time limit set $\Omega_-(w_0)$; this is a nonempty, compact, connected subset of $\overline{B^d}$.  The potential $\Phi$ is decreasing along all trajectories $w(t)$, hence bounded below as $t \to - \infty$, so Lemma 2 implies that the limit set $\Omega_-(w_0)$ must be contained in the interior $B^d$.  We know that any  $w^\ast \in \Omega_-(w_0)$ is a fixed point for the flow.    By Lemma 1, $w^\ast$ is repelling and so any trajectory $w(t)$ which comes sufficiently close to $w^\ast$ must have $w(t) \to w^\ast$ as $t \to -\infty$; therefore $\Omega_-(w_0) = \{w^\ast \}$.  This proves the existence of fixed points for~\eqref{flow}, and that every trajectory $w(t)$ converges to a fixed point as $t \to -\infty$.  If the flow had multiple fixed points, we would obtain a partition of $B^d$ into the disjoint open basins of repulsion of the fixed points, violating connectedness of the ball.  Therefore~\eqref{flow} has a unique fixed point $w^\ast$, and $w(t) \to w^\ast$ as $t \to -\infty$ for all trajectories.  The fixed point $w^\ast$ has $Z(M_{w^\ast}(p)) = 0$, so all trajectories in $Gp$ converge to $\Sigma$ as $t \to -\infty$.

In forward time, the limit set $\Omega_+(w_0)$ for any $w_0 \ne w^\ast$ must be completely contained in the boundary $S^{d-1}$, since the unique fixed point $w^\ast \in B^d$ is repelling.
Suppose we remove the factor $(1/2) (1 - |w|^2)$ in the flow~\eqref{flow}; the scaled vector field on $B^d$ given by
\begin{equation}
\dot w = -\sum_{i = 1}^N a_i M_w(p_i) = w -\sum_{i = 1}^N a_i  \left ( {(1-|w|^2) (p_i - w) \over |p_i - w |^2 }  \right ) \label{scaledsystem}
\end{equation}
has the same trajectories as the original flow, just with different time parametrizations.  Observe that this scaled vector field extends smoothly to ${\Bbb R}^d - \{p_i\}$, and coincides with the radial vector field $x$ at any $x \in S^{d-1}$ with $x \ne p_i$.  Therefore there is a unique trajectory passing through each point $x \in S^{d-1}$, flowing from the interior to the exterior of the sphere, as long as $x \ne p_i$.  Consequently the original flow~\eqref{flow} has a unique trajectory $w(t)$ in $B^d$ with $w(t) \to x$ as $t \to \infty$, as long as $x \ne p_i$.  This also shows that there is a neighborhood $U$ of $S^{d-1} -\{p_i\}$ such that if $w(t_0) \in U$ for some $t_0$, then $w(t) \to x \ne p_i$ for some $x \in S^{d-1}$.  So if $\Omega_+(w_0)$ contains some $x \ne p_i$, then the trajectory $w(t)$ of $w_0$ must enter the neighborhood $U$, and therefore $w(t) \to x \in S^{d-1}$ as $t \to \infty$.

Since limit sets are connected, the only other possibility is $\Omega_+(w_0) = \{ p_i \}$ for some $i$; equivalently, $w(t) \to p_i$.  We will show that there is a unique trajectory with this behavior for each $p_i$.  Assuming this, we see that with $N+1$ exceptions, any trajectory $w(t)$ converges to a point $x \in S^{d-1}$ with $x \ne p_i$ (the exceptions are the $N$ trajectories converging to the base point coordinates $p_i$, and the fixed point trajectory $w^\ast$).  The corresponding trajectory in $Gp$ has coordinates
$$
\zeta(t) M_{w(t)} (p_i) = \zeta(t) \left ( {(1-|w(t)|^2) (p_i - w(t)) \over |p_i - w(t)|^2 } - w(t) \right ).
$$
We have $|w(t)| \to 1$ and $ |p_i - w(t)|$ is bounded away from $0$ as $t \to \infty$, so $M_{w(t)} (p_i)  \to -x$ for each $i$ and therefore the trajectory $\zeta(t) M_{w(t)} (p)$ in $Gp$ converges to $\Delta$ as $t \to \infty$. 

This analysis breaks down at $x = p_i$ because the scaled vector field above does not have a unique limit as $w \to p_i$; rather, its limit depends on the direction of the approach.  To see this, write $w = p_1 -r u$, where $0 < r < 1$ and $| u | = 1$ (with this convention, $u = p_1$ corresponds to $w$ approaching $p_1$ radially).  Then $|p_1 - w| = r$ and 
$$
 |w |^2 = 1 - 2r \langle p_1, u \rangle + r^2
$$
so
$$
 {(1-|w|^2) (p_1 - w) \over |p_1 - w |^2 } = { (2r \langle p_1, u \rangle - r^2 ) ru \over r^2} =  \left ( 2\langle p_1, u \rangle - r \right) u.
 $$
As $r \to 0$, the magnitude of this term is $2 \langle p_1, u \rangle$, which depends on the angle of approach given by $u$
(note that $ \langle p_1, u \rangle > 0 $ because $u$ points outwards at $p_1$).

To complete the proof, we will examine the scaled system \eqref{scaledsystem} using the polar representation $(r, u)$, and show that the polar system has the unique fixed point $r^\ast = 0, u^\ast = p_1$, which has a unique attracting trajectory because it is a saddle with a $(d-1)$-dimensional unstable manifold.

We see that the scaled system has
\begin{align*}
\dot w &= p_1-ru  - a_1  \left ( 2\langle p_1, u \rangle - r \right) u +O(r)
\\&= p_1 - 2 a_1 \langle p_1, u \rangle u +O(r),
\end{align*}
where the $O(r)$ term is a smooth function of $r$ and $u$ for $|r|  < \epsilon = \min  |p_i-p_1|$, $i \ge 2$.  This condition insures that $|p_i - w| \ge |p_i - p_1| -|r| >0$, so the $i \ge 2$ terms in the scaled $\dot w$ equation are all smooth functions of $r$ and $u$.  And we can allow $r < 0$ here, even though it is not relevant to the $\dot w$ system.
Now $r^2 = |w-p_1|^2$, so
$$
r \dot r = \langle w-p_1, \dot w \rangle = -r \langle u, \dot w \rangle,
$$
which gives
$$
 \dot r  = -(1-2a_1) \langle p_1, u \rangle  +O(r).
$$
Differentiating $ru =  p_1 - w$ gives
\begin{align*}
r \dot u &=-\dot r u - \dot w \\ &= (1-2a_1) \langle p_1, u \rangle   u -  \Bigl ( p_1 - 2a_1\langle p_1, u \rangle u \Bigr ) + O(r) 
\\&= \langle p_1, u \rangle   u - p_1 +O(r).
\end{align*}
Hence the scaled system in polar form can be written

\begin{align*}
r \dot r &=  -(1-2a_1)   r  \,\langle p_1, u \rangle  +O(r^2), \\
r \dot u &=  \langle p_1, u \rangle \,  u -   p_1 +O(r).
\end{align*}
We emphasize that the $O(r)$ and $O(r^2)$ terms are smooth functions of $r, u$ as long as $|r| < \epsilon$.  We consider the ``semi-scaled'' polar system
\begin{subequations}
\begin{equation}
\dot r =  -(1-2a_1)   r  \,\langle p_1, u \rangle  +O(r^2),
\end{equation}
\begin{equation}
\dot u =  \langle p_1, u \rangle \,  u -   p_1 +O(r), 
\end{equation}
\label{semiscaled}
\end{subequations}
which has the same trajectories as the original system, just with different time parametrizations.  The advantage of this modified system is that the equations are smooth on $(-\epsilon, \epsilon) \times S^{d-1}$.  

Observe that the system \eqref{semiscaled} has $\{0\} \times S^{d-1} $ invariant, and has  fixed point $(r^\ast, u^\ast) = (0, p_1)$.  The fixed point $p_1$ is repelling on the invariant manifold $\{0\} \times S^{d-1} $; to see this, observe that
$$
 \langle p_1, u \rangle  \, \dot {} = \langle p_1, u \rangle ^2 - 1
$$
when $r = 0$.   In fact, if  we assign the coordinate $\theta$ on any great circle joining $p_1$ and $-p_1$ on $S^{d-1}$ so that $u = e^{i \theta}$ and $p_1 = 1$, then the system reduces to $\dot \theta = \sin \theta$.  We also see that the $\dot r$ equation linearized at $(0, p_1)$ is $\dot r =  -(1-2a_1)   r$, so the linearization of \eqref{semiscaled}  has the single negative eigenvalue $ -(1-2a_1)$ and $d-1$ positive eigenvalues $+1$.   Therefore  $(0, p_1)$ is a saddle with a one-dimensional stable manifold, and hence has a unique trajectory $(r(t), u(t)) \to (0, p_1)$ with $r(t) > 0$.
 
 Now suppose we have a trajectory $w(t) \to p_1$ in our original system~\eqref{flow}.  The corresponding trajectory for \eqref{semiscaled} will have $r(t) \to 0$; we cannot achieve $r(t) = 0$ in finite time because the manifold $\{ r = 0\}$ is invariant for \eqref{semiscaled}.  We must prove that $u(t) \to p_1$, so that this trajectory is in fact the saddle stable manifold.  Observe that
 $$
 \langle p_1, u \rangle  \, \dot {} = \langle p_1, u \rangle ^2 - 1 +O(r).
$$
Also note that $\langle p_1, u (t) \rangle > 0$ since $|w(t)| < 1$.   Let $0 < c < 1$; then for some $T \ge 0$, $t \ge T$ implies  $O(r(t)) \le ( 1-c^2)/2$.  Now suppose $0 < \langle p_1, u (t_0) \rangle < c$ for some $t_0 \ge T$; then $0 < \langle p_1, u (t) \rangle  c$ for all $t \ge t_0$. This is because the function $ t \mapsto \langle p_1, u (t) \rangle$ is decreasing if  $0 < \langle p_1, u (t) \rangle <  c$:
$$
 \langle p_1, u (t)  \rangle  \, \dot {} \le c^2 -1 +{1 \over 2} (1-c^2) = -\frac{1}{2}(1-c^2)
 $$
as long as $0 < \langle p_1, u (t) \rangle <  c$.  But this also implies that eventually $ \langle p_1, u (t)  \rangle  < 0$, which is a contradiction.  Hence we must have $  \langle p_1, u (t)  \rangle  \ge c$ for all $t \ge T$, which proves that $u(t) \to p_1$.
\end{proof}

\section{Summary and Discussion}

The natural hyperbolic geometry on the unit ball, with isometries consisting of the higher-dimensional M\"obius group, is key to understanding the dynamics of the Kuramoto model on a sphere.  Using this framework, we see that dynamical trajectories of \eqref{governingeqn} are constrained to lie on group orbits, and we can explicitly give the equations for the reduced dynamics on the group orbits.  For the special class of linear order parameters, the dynamics can be further reduced to a flow on the unit ball $B^d$, which is gradient with respect to the hyperbolic metric.  We analyze this flow and prove a global synchronization result for the system \eqref{governingeqn} for linear order parameters with positive weights and no weight greater than half the total.  This illustrates the power of the geometric / group-theoretic approach.

We conclude with some directions for future research.  The case of linear order parameters with both positive and negative weights can in principle be explored using similar methods; it will also reduce to a hyperbolic gradient system on the ball $B^d$.  In particular, the case when the sum of the weights is $0$ should have some intriguing dynamics. For the original ($d=2$) Kuramoto model, the dynamics are Hamiltonian and equivalent to the vector field on the unit disc given by placing a collection of point charges on the unit circle, with total charge $0$, as shown by Chen et al.~\cite{chen2019dynamics}.  Of course this result cannot generalize to all higher dimensions $d$; Hamiltonian dynamics is only possible in even dimensions.  

Another possible direction to explore is the case of systems where the oscillator population is divided into two or more families with different intrinsic rotational terms $A_i$, and the coupling across families differs from the coupling within families.  For the case $d = 2$, these systems often support ``chimera states,'' in which one or more  families synchronize while others tend to a partially disordered configuration.  These states can be dynamically stable within their M\"obius group orbits.  Our framework enables the dynamical reduction of multi-family networks of higher-dimensional oscillators, and makes possible the study of the dynamics of these networks without necessarily passing to the continuum limit, as is often done as a simplifying step in the analysis of Kuramoto networks.

\begin{acknowledgments}
Research supported in part by the NSF Research Training Group Grant: Dynamics, Probability, and PDEs in Pure and Applied Mathematics, DMS-1645643, and by NSF grant DMS-1910303. We thank Vladimir Jaćimović and Max Lohe for helpful comments on a preprint of this paper.
\end{acknowledgments}

\section*{Data Availability}
The data that supports the findings of this study are all available within the article.

\providecommand{\noopsort}[1]{}\providecommand{\singleletter}[1]{#1}%

\end{document}